\newtheorem{theorem}{Theorem}[section]
\newtheorem{corollary}[theorem]{Corollary}
\newcounter{maintheorem}
\theoremstyle{remark}
\newtheorem{remark}[theorem]{Remark}
\theoremstyle{definition}
\newtheorem{definition}[theorem]{Definition}
\newtheorem{example}[theorem]{Example}
\numberwithin{equation}{section}
\newcommand{\nn}[1]{{\left\vert\kern-0.25ex\left\vert\kern-0.25ex\left\vert #1
		\right\vert\kern-0.25ex\right\vert\kern-0.25ex\right\vert}}
\renewcommand{\leq}{\leqslant}
\renewcommand{\geq}{\geqslant}
\newcommand{\dif}{\nabla_{xy}}
\newcounter{smallromans}
\begin{document}
	\title[Nonexistence of solutions to parabolic problems on weighted graphs]{Nonexistence of solutions to parabolic problems with a potential on weighted graphs}
	
	\author[D.D. Monticelli]{Dario D. Monticelli}
	\address[D.D. Monticelli]{Politecnico di Milano, Dipartimento di Matematica, Piazza Leonardo da Vinci 32, 20133 Milano, Italy}
	\email{dario.monticelli@polimi.it}
	
	\author[F. Punzo]{Fabio Punzo}
	\address[F. Punzo]{Politecnico di Milano, Dipartimento di Matematica, Piazza Leonardo da Vinci 32, 20133 Milano, Italy}
	\email{fabio.punzo@polimi.it}
	
	\author[J. Somaglia]{Jacopo Somaglia}
	\address[J. Somaglia]{Politecnico di Milano, Dipartimento di Matematica, Piazza Leonardo da Vinci 32, 20133 Milano, Italy}
	\email{jacopo.somaglia@polimi.it}
	
	\keywords{Graphs, semilinear parabolic equations on graphs, nonexistence of global solutions, distance function, weighted volume, test functions.}
	\subjclass[2020]{35A01, 35A02, 35B44, 35K05, 35K58, 35R02.}
	\date{\today}
	
	\begin{abstract}
		We investigate nonexistence of nontrivial nonnegative solutions to a class of semilinear parabolic equations with a positive potential, posed on weighted graphs. Assuming an upper bound on the Laplacian of the distance and a suitable weighted space-time volume growth condition, we show that no global solutions exists. We also discuss the optimality of the hypotheses, thus recovering a critical exponent phenomenon of Fujita type.
	\end{abstract}
	\maketitle
	
	
	\section{Introduction}

	We investigate nonexistence of nonnegative, nontrivial global solutions of semilinear parabolic inequalities of the following type:
	\begin{equation}\label{e: mainequation}
		u_t \geq \Delta u + v u^{\sigma} \quad \text{ in } V\times (0, +\infty)\,,
	\end{equation}
	where $V$ is a weighted graph, $\Delta$ stands for the weighted Laplacian on the graph, $v$ is a given positive potential and $\sigma>1\,.$ 
	
	This subject has a long and rich history when posed not on graph, but in the Euclidean space or on Riemannian manifolds. More precisely,
	global existence and finite time blow-up of solutions have been deeply studied in $\mathbb{R}^N$; when $v\equiv 1$, it has been shown by Fujita in \cite{F}, and in \cite{H} and \cite{KST} for the critical case, that 
	\begin{enumerate}[a)]
		\item if  $1 < \sigma\le \sigma^*:=1+\frac{2}N$, then there is no nonnegative nontrivial global solutions;
		\item if $\sigma > \sigma^*$ solutions corresponding to nonnegative data that are sufficiently small in a suitable sense, are global in time.
	\end{enumerate}
	A complete account of results concerning blow-up and global existence of solutions to semilinear parabolic equations posed in $\mathbb R^N$ can be found, e.g., in \cite{BB}, \cite{DL}, \cite{Levine} and in references therein.
	
	On Riemannian manifolds, the situation can be different. Some results can be found e.g. in \cite{BPT, GMPu, Sun1, MaMoPu, Punzo, Pu22, WY2, Zhang}. Furthermore, analogue results have also been established for quasilinear evolution equations (see e.g. \cite{MaMoPu, MeGP1, MeGP2, MeMoPu, Mitidieri2}). In particular, in \cite{MaMoPu} it has been shown that the unique global nonnegative solution is $u\equiv 0$ on the manifold $M$, provided that a certain weighted space-time volume growth condition is fulfilled. In particular, for instance, in the semilinear case with $v\equiv 1$, that condition reads:
	\[\operatorname{Vol} (B_R) \leq C R^{\frac2{\sigma-1}} (\log R)^{\frac 1{\sigma-1}} \text{ for all  sufficientlt large }R>0\,;\]
	here $B_R\subset M$ denotes the geodesic ball of radius $R$ centered at some reference point $o\in M.$ Observe that both exponents are sharp. 
	
	\medskip
	
	Very recently the stationary problem corresponding to \eqref{e: mainequation}, that is 
	\begin{equation}\label{e100f}
		\Delta u + v u^{\sigma} \leq 0 \quad \text{ in } V,
	\end{equation} 
	has also been investigated (see \cite{GHS, MPS1}, and \cite{BMP, MP} for the case $\sigma=1$). More specifically in \cite{MPS1}, given a distance on the graph, an upper bound on its Laplacian and a growth condition on a suitable weighted volume of balls are assumed. Under such hypotheses, it is proved that problem \eqref{e100f} does not admit any nonnegative nontrivial solution. 
	
	\medskip
	
	Semilinear parabolic equations with $v\equiv 1$ on graphs have already been studied in the literature. Blow-up of solutions and global existence have been addressed in \cite{LW2, LWU, MeGPu3} by means of semigroup methods and heat kernel estimates, relying on specific metric or spectral assumptions on the graph. 
	
	In this paper, supposing an upper bound on the Laplacian of the distance, and an appropriate space-time weighted volume growth condition, we show that \eqref{e: mainequation} admits no nontrivial nonnegative solutions (see Theorem \ref{teo1} and its corollaries for precise statements). We also show that the result is sharp when $V=\mathbb Z^N$ and $v\equiv 1$ (see Example \ref{ex1}). Let us explicitly note that our line of proof is totally different from those of \cite{LW2, LWU, MeGPu3}. In fact, we do not use the semigroup approach or heat kernel estimates, but we derive certain a priori estimates on the solution $u$, by means of a test function argument. In doing that, we assume and exploit an upper bound on the Laplacian of the distance on the graph. 
	
	\medskip
	
	The paper is organized as follows. In Section \ref{mr} we describe the mathematical framework and we state the main results. Section \ref{sec3} is devoted to the proofs of the main theorem (i.e. Theorem \ref{teo1}) and of its corollaries, which are concerned with the case of an infinite graph $V$. In Section \ref{ex} we illustrate some examples. Lastly, in Section \ref{finite} we briefly consider the case of a finite graph $V$.
	
	\medskip
	
	\textbf{Acknowledgements.} The authors are members of GNAMPA-INdAM and are partially supported by
	GNAMPA projects 2024. Moreover, Dario D. Monticelli and F. Punzo acknowledge that this work is part of the PRIN ”Geometric-analytic methods for PDEs and applications", ref. 2022SLTHCE, financially supported by the EU, in the framework of the "Next Generation EU initiative".

	\section{Assumptions and main results}\label{mr}
	\subsection{The graph setting}
	Let $V$ be a countable set and $\mu:V\to (0,+\infty)$ be a given function.
	Furthermore, let
	\begin{equation*}
		\omega:V\times V\to [0,+\infty)
	\end{equation*}
	be a symmetric function with zero diagonal and finite sum, i.e.
	\begin{equation}\label{omega}
		\begin{array}{ll}
			\text{(i)}\,\, \displaystyle\omega_{xy}=\omega_{yx}&\text{for all}\,\,\, (x,y)\in V\times V;\\
			\text{(ii)}\,\, \displaystyle\omega_{xx}=0 \quad\quad\quad\,\, &\text{for all}\,\,\, x\in V;\\
			\text{(iii)}\,\, \displaystyle \sum_{y\in V} \omega_{xy}<\infty \quad &\text{for all}\,\,\, x\in V\,.
		\end{array}
	\end{equation}
	Thus, we define  \textit{weighted graph} the triplet $(V,\omega,\mu)$, where $\omega$ and $\mu$ are the so called \textit{edge weight} and \textit{node measure}, respectively. Observe that assumption (ii) corresponds to ask that $V$ has no loops.
	\smallskip
	
	\noindent Let $x,y$ be two points in $V$; we say that
	\begin{itemize}
		\item $x$ is {\it connected} to $y$ and we write $x\sim y$, whenever $\omega_{xy}>0$;
		\item the couple $(x,y)$ is an {\it edge} of the graph and the vertices $x,y$ are called the {\it endpoints} of the edge whenever $x\sim y$; we will denote by $E$ the set of all edges of the graph;
		\item a collection of vertices $ \{x_k\}_{k=0}^n\subset V$ is a {\it path} consisting of $n+1$ nodes if $x_k\sim x_{k+1}$ for all $k=0, \ldots, n-1.$
	\end{itemize}

	A weighted graph $(V,\omega,\mu)$  is said to be
	\begin{itemize}
		\item[(i)] {\em locally finite} if each vertex $x\in V$ has only finitely many $y\in V$ such that $x\sim y$;
		\item[(ii)] {\em connected} if, for any two distinct vertices $x,y\in V$, there exists a path joining $x$ to $y$;
		\item[(iii)] {\em undirected} if its edges do not have an orientation.
	\end{itemize}
	For any $x\in V$ we will call {\em degree of $x$} the cardinality of the set $$\{y\in V\colon y\sim x\}.$$
	
	\noindent A {\it pseudo metric} on $V$ is a symmetric map, with zero diagonal, $d:V\times V\to [0, +\infty)$, which also satisfies the triangle inequality
	\begin{equation*}
		d(x,y)\leq d(x,z)+d(z,y)\quad \text{for all}\,\,\, x,y,z\in V.
	\end{equation*}
	In general, $d$ is not a metric, since there may be points $x, y\in V$, $x\neq y$ such that $d(x,y)=0\,.$
	Finally, we define the \textit{jump size} $j>0$ of a general pseudo metric $d$ as
	\begin{equation}\label{e14f}
		j:=\sup\{d(x,y) \,:\, x,y\in V, \omega(x,y)>0\}.
	\end{equation}
	
	For any given $\Omega\subset V$ its {\it volume} is defined as
	\[\operatorname{Vol}(\Omega):=\sum_{x\in \Omega}\mu(x)\,.\]
	
	For a set $\Omega\subset V$ we denote by $\textbf{1}_\Omega$ the characteristic function of $\Omega$.

	\subsection{The weighted Laplacian} Let $\mathfrak F$ denote the set of all functions $f: V\to \mathbb R$\,. For any $f\in \mathfrak F$ and for all $x,y\in V$, let us give the following
	\begin{definition}
		Let $(V, \omega,\mu)$ be a weighted graph. For any $f\in \mathfrak F$,
		\begin{itemize}
			\item the {\em difference operator} is
			\begin{equation}\label{e2f}
				\nabla_{xy} f:= f(y)-f(x)\,;
			\end{equation}
			\item the {\em (weighted) Laplace operator} on $(V, \omega, \mu)$ is
			\begin{equation*}
				\Delta f(x):=\frac{1}{\mu(x)}\sum_{y\in V}\omega_{xy}[f(y)-f(x)]=\frac 1{\mu(x)}\sum_{y\sim x}\omega_{xy}[f(y)-f(x)]\quad \text{ for all }\, x\in V\,.
			\end{equation*}
		\end{itemize}
	\end{definition}
	
	It is straightforward to show, for any $f,g\in \mathfrak F$, the validity of
	\begin{itemize}
		\item  the {\it product rule}
		\begin{equation*}
			\nabla_{xy}(fg)=f(x) (\nabla_{xy} g) + (\nabla_{xy} f)g(y) \quad \text{ for all } x,y\in V\,;
		\end{equation*}
		\item the {\it integration by parts formula}
		\begin{equation}\label{e4f}
			\sum_{x\in V}[\Delta f(x)] g(x) \mu(x)=-\frac 1 2\sum_{x,y\in V}\omega_{xy}(\dif f)(\dif g)= \sum_{x\in V}f(x) [\Delta g(x)] \mu(x)\,,
		\end{equation}
		provided that at least one of the functions $f, g\in \mathfrak F$ has {\it finite} support.
	\end{itemize}
	
	\subsection{Main results}
	
	\begin{definition}
		We say that $u:V\times[0,+\infty)\rightarrow\mathbb{R}$ is a nonnegative \textit{very weak solution} of \eqref{e: mainequation} if $u\geq0$ and
		\begin{equation}\label{e:48}
			\begin{split}
				\int_0^{+\infty}\sum_{x\in V}\mu(x)\Delta u(x,t) \varphi(x,t)+\mu(x)v(x,t)u^\sigma(x,t)\varphi&(x,t)+\mu(x)u(x,t)\varphi_t(x,t)\,dt\\
				&+\sum_{x\in V}\mu(x)u(x,0)\varphi(x,0)\leq0
			\end{split}
		\end{equation}
		for every $\varphi:V\times[0,+\infty)\rightarrow\mathbb{R}$ such that $\varphi\geq0$, $\operatorname{supp}\varphi\subset[0,T]\times A$ for some $T>0$ and some finite set $A\subset V$ and $\varphi(x,\cdot)\in W^{1,1}([0,+\infty))$ for every $x\in V$.
	\end{definition}
	\begin{remark}
		Note that if a solution $u(x,t)$ is $W^{1,1}_{\text{loc}}([0,+\infty))$ in $t$ for every $x\in V$, then \eqref{e:48} follows from testing equation \eqref{e: mainequation} against $\varphi$, by a simple integration by parts.
	\end{remark}
	
	\medskip
	
	In this paper, we always make the following assumptions:
	\begin{equation}\label{e7f}
		\begin{aligned}
			\text{(i)}\,\,\, & (V, \omega, \mu) \text{ is a connected, locally finite, undirected, weighted graph};\\
			\text{(ii)}\,\, \, &  \text{ there exists a constant } C>0 \text{ such that for every } x\in V, \\
			&\qquad \sum_{y\sim x}\omega_{xy}\leq C \mu(x);\\
			\text{(iii)} \,\,\,& \text{there exists a \textit{pseudo metric}}\,\, d \,\,\,\text{such that its jump size $j$ is finite}; \\
			\text{(iv)}\,\,\,& \text{the ball}\,\,\, B_r(x) \,\,\,\text{with respect to}\,\,\, d\,\,\, \text{is a finite set, for any}\,\,\, x\in V,\,\,\, r>0;\\
			\text{(v)}\,\, & \text{ for some } x_0\in V,\, R_0>1,\, \alpha\in [0, 1],\, C>0 \text{ there holds } \\
			& \qquad\Delta d(x, x_0)\leq \frac C{d^\alpha(x, x_0)} \text{ for any }\, x\in V\setminus B_{R_0}(x_0).
		\end{aligned}
	\end{equation}

	\begin{remark}\label{4444}
		We note that if $(V, \omega, \mu)$ is a connected, locally finite, undirected, weighted graph such that $\sum_{y\sim x}\omega_{xy}\leq C \mu(x)$ for every $x\in V$, endowed with a metric $d$ with finite jump size $j$ and such that $B_R(x)$ is finite for every $x\in V, R>0$, then condition \eqref{e7f}--(v) is automatically satisfied with $\alpha=0$. Indeed we have
		\begin{equation*}
			\Delta d(x,x_0)=\frac{1}{\mu(x)}\sum_{y\sim x}\omega_{xy}[d(y,x_0)-d(x,x_0)]\leq \frac{1}{\mu(x)}\sum_{y\sim x}\omega_{xy} d(x,y)\leq  Cj,
		\end{equation*}
		for every $x\in V$. Then Theorem \ref{teo1} and all its corollaries immediately apply in this case, with $\alpha=0$.
		
		This is in particular the case of a connected, locally finite, undirected, weighted graph $(V, \omega, \mu)$ such that $\sum_{y\sim x}\omega_{xy}\leq C \mu(x)$ for every $x\in V$, endowed with its natural distance $d_*$, defined by
		\begin{equation}\label{333}
			d_*(x,y)=\min\{k\in\mathbb{N}\,:\,\exists\text{ path }x_0=x,x_1,\ldots,x_k=y\text{ of }k+1\text{ nodes connecting }x,y\}
		\end{equation}
		for every $x,y\in V$, which has jump size $j_*=1$.
	\end{remark}
	
	\begin{remark}\label{rem111}
		We note that given $\alpha\in[0,1]$, $R_0>2j$ we have
		\begin{equation}\label{13}
			\Delta d(x, x_0)\leq \frac C{d^\alpha(x, x_0)} \text{ for any }\, x\in V\setminus B_{R_0}(x_0).
		\end{equation}
		if and only if
		\begin{equation}\label{14}
			\Delta d^{1+\alpha}(x, x_0)\leq C\text{ for any }\, x\in V\setminus B_{R_0}(x_0).
		\end{equation}
		Indeed, by the convexity of the function $p^{1+\alpha}$, for very $p,r\geq0$ we have
		\[
		p^{1+\alpha}\geq r^{1+\alpha}+(1+\alpha)r^\alpha(p-r),
		\]
		hence, with $p=d(y,x_0)$, $r=d(x,x_0)$, we obtain
		\begin{equation*}
			\begin{split}
				\Delta d^{1+\alpha}(x,x_0)&=\frac{1}{\mu(x)}\sum_{y\sim x}\omega_{xy}(d^{1+\alpha}(y,x_0)-d^{1+\alpha}(x,x_0))\\
				&\geq\frac{1}{\mu(x)}\sum_{y\sim x}(1+\alpha)\omega_{xy}d^{\alpha}(x,x_0)(d(y,x_0)-d(x,x_0))\\
				&=(1+\alpha)d^{\alpha}(x,x_0)\Delta d(x,x_0).
			\end{split}
		\end{equation*}
		From the inequality above we see that \eqref{14} readily implies \eqref{13}, actually for every $\alpha\geq0$. On the other hand, using Taylor expansions, for every $p,r\geq0$ we have
		\[
		p^{1+\alpha}= r^{1+\alpha}+(1+\alpha)r^\alpha(p-r)+\frac{\alpha(1+\alpha)}{2}\eta^{\alpha-1}(p-r)^2
		\]
		for some $\eta$ between $p,r$. Thus, again with $p=d(y,x_0)$, $r=d(x,x_0)$, we obtain
		\begin{equation*}
			\begin{split}
				\Delta d^{1+\alpha}(x,x_0)&=\frac{1}{\mu(x)}\sum_{y\sim x}\omega_{xy}(d^{1+\alpha}(y,x_0)-d^{1+\alpha}(x,x_0))\\
				&=\frac{1}{\mu(x)}\sum_{y\sim x}(1+\alpha)\omega_{xy}d^{\alpha}(x,x_0)(d(y,x_0)-d(x,x_0))\\
				&\,\,\,\,\,+\frac{1}{\mu(x)}\sum_{y\sim x}\frac{\alpha(1+\alpha)}{2}\omega_{xy}\eta^{\alpha-1}(d(y,x_0)-d(x,x_0))^2.
			\end{split}
		\end{equation*}
		Now note that for every $y\sim x$, by the definition of jump size, we have
		\[
		d(x,x_0)-j\leq d(y,x_0)\leq d(x,x_0)+j
		\]
		and that
		\[
		\min\{d(x,x_0),d(y,x_0)\}\leq\eta\leq\max\{d(x,x_0),d(y,x_0)\}.
		\]
		Hence $\eta\geq d(x,x_0)-j$. Since $\alpha-1\leq0$, if \eqref{13} holds then for every $x\in V\setminus B_{R_0}(x_0)$ we have
		\begin{equation*}
			\begin{split}
				\Delta d^{1+\alpha}(x,x_0)&\leq(1+\alpha)d^{\alpha}(x,x_0)\Delta d(x,x_0)\\
				&\,\,\,\,\,+\frac{\alpha(1+\alpha)}{2}j^2(d(x,x_0)-j)^{\alpha-1}\frac{1}{\mu(x)}\sum_{y\sim x}\omega_{xy}\leq C,
			\end{split}
		\end{equation*}
		that is \eqref{14}.
	\end{remark}
	
	Now we state our main theorem, which deals with the case of an infinite graph $V$.
	
	\begin{theorem}\label{teo1}
		Let assumption \eqref{e7f} be satisfied. Assume the graph $(V,\omega,\mu)$ is infinite.  Let $v\colon V\times [0,\infty)\to \mathbb{R}$ be a positive function, $\sigma>1$ and $\theta_1\geq 2$, $\theta_2\geq 1$ such that $\frac{\theta_1}{\theta_2}\geq 1 + \alpha$, with $\alpha\in[0,1]$ as in \eqref{e7f}. Suppose that for every $R\geq R_0>1$
		\begin{equation}\label{e: stimavolumi}
			\int_0^{+\infty}\sum_{x\in V} \textbf{1}_{E_R}(x,t) v(x,t)^{-\frac{1}{\sigma-1}}\mu(x)dt\leq CR^{\frac{(1+\alpha)\sigma}{\sigma-1}},
		\end{equation}
		where
		\begin{equation}\label{22}
			E_R\coloneqq\{(x,t)\in V\times [0,+\infty)\colon R^{\theta_1}\leq d(x_0,x)^{\theta_1}+t^{\theta_2}\leq 2 R^{\theta_1}\}
		\end{equation}
		and  $x_0\in V$ as in \eqref{e7f}. Let $u\colon V\times [0,\infty)\to \mathbb{R}$ be a non-negative very weak solution of \eqref{e: mainequation}, then $u\equiv 0$.
	\end{theorem}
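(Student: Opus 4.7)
The plan is to run a Mitidieri--Pohozaev type test function argument adapted to the graph setting. Fix an integer $m > 2\sigma/(\sigma-1)$ and a smooth cutoff $\eta \in C^\infty([0,+\infty);[0,1])$ with $\eta\equiv 1$ on $[0,1]$, $\eta\equiv 0$ on $[2,+\infty)$, $|\eta'|,|\eta''|\leq C$. For $R\geq R_0$ set
\begin{equation*}
\varphi_R(x,t) := \eta\!\left(\frac{d(x,x_0)^{\theta_1}+t^{\theta_2}}{R^{\theta_1}}\right)^{m}.
\end{equation*}
By \eqref{e7f}(iv), $\varphi_R$ is finitely supported in $V$ for every $t$ and compactly supported in time, hence admissible in \eqref{e:48}. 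Testing \eqref{e:48} against $\varphi_R$, transferring $\Delta$ onto $\varphi_R$ via \eqref{e4f}, and dropping the nonnegative initial term produces
\begin{equation*}
I(R) := \int_0^{+\infty}\sum_{x\in V} v\,u^\sigma\,\varphi_R\,\mu\,dt \;\leq\; \int_0^{+\infty}\sum_{x\in V} u\,|\Delta\varphi_R + \partial_t\varphi_R|\,\mu\,dt.
\end{equation*}

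The central pointwise estimate I aim to establish, on the support of the right-hand side integrand (a $j$-neighborhood of $E_R$, of comparable weighted volume), is
\begin{equation*}
|\Delta\varphi_R(x,t) + \partial_t\varphi_R(x,t)| \;\leq\; C\,R^{-(1+\alpha)}\,\varphi_R(x,t)^{1-2/m}.
\end{equation*}
For the time derivative, the bound $t\leq CR^{\theta_1/\theta_2}$ on the support together with $\theta_1/\theta_2\geq 1+\alpha$ gives $|\partial_t\varphi_R| \leq CR^{-(1+\alpha)}\eta^{m-1}$. For the spatial part I use Remark \ref{rem111} to replace \eqref{e7f}(v) by the equivalent bound $\Delta d^{1+\alpha}(\cdot,x_0) \leq C$ outside $B_{R_0}(x_0)$. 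Writing $\varphi_R(x,t) = G(d(x,x_0)^{1+\alpha},t)$ with $G(s,t) = \eta(R^{-\theta_1}(s^{\theta_1/(1+\alpha)}+t^{\theta_2}))^m$ and Taylor expanding $G$ in $s$,
\begin{equation*}
\Delta\varphi_R(x,t) = G_s\bigl(d(x,x_0)^{1+\alpha},t\bigr)\Delta d^{1+\alpha}(x,x_0) + \frac{1}{2\mu(x)}\sum_{y\sim x}\omega_{xy}\,G_{ss}(\xi_{x,y},t)\bigl(d(y,x_0)^{1+\alpha}-d(x,x_0)^{1+\alpha}\bigr)^2.
\end{equation*}
Direct differentiation gives $|G_s|\leq C\eta^{m-1}R^{-(1+\alpha)}$ and $|G_{ss}|\leq C\eta^{m-2}R^{-2(1+\alpha)}$ on the support. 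Combining $G_s\leq 0$ with $\Delta d^{1+\alpha}\leq C$ produces $-G_s\Delta d^{1+\alpha}\leq C|G_s|$; using the jump-size bound $|d(y,x_0)^{1+\alpha}-d(x,x_0)^{1+\alpha}|\leq CR^{\alpha}$ and the degree condition \eqref{e7f}(ii) to control the Taylor remainder then yields the stated estimate for $x\notin B_{R_0}(x_0)$. Vertices $x\in B_{R_0}(x_0)$ are handled directly via $|\varphi_R(y,t)-\varphi_R(x,t)|\leq|G_s|(R_0+j)^{1+\alpha}$.

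Applying H\"older's inequality with conjugate exponents $\sigma$ and $\sigma/(\sigma-1)$ gives
\begin{equation*}
I(R) \leq \left(\int_{E_R}\sum v u^\sigma \varphi_R\,\mu\,dt\right)^{1/\sigma}\left(\int_{E_R}\sum v^{-\frac{1}{\sigma-1}}\varphi_R^{-\frac{1}{\sigma-1}}|\Delta\varphi_R+\partial_t\varphi_R|^{\frac{\sigma}{\sigma-1}}\mu\,dt\right)^{(\sigma-1)/\sigma}.
\end{equation*}
The choice $m>2\sigma/(\sigma-1)$ makes the exponent $[m(\sigma-1)-2\sigma]/(\sigma-1)$ of $\eta$ in the second factor nonnegative, so that integrand is bounded by $CR^{-(1+\alpha)\sigma/(\sigma-1)}v^{-1/(\sigma-1)}$, and \eqref{e: stimavolumi} bounds the whole second factor by a constant. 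Since $\int_{E_R}vu^\sigma\varphi_R\mu\,dt\leq I(R)$, rearranging yields $I(R)\leq C$ uniformly in $R\geq R_0$; Fatou's lemma gives $\int_0^{+\infty}\sum vu^\sigma\mu\,dt<+\infty$. This finiteness forces $\int_{E_R}\sum vu^\sigma\mu\,dt\to 0$ as $R\to+\infty$; reinserting in the H\"older inequality then yields $I(R)\to 0$, while dominated convergence gives $I(R)\to\int_0^{+\infty}\sum vu^\sigma\mu\,dt$. Hence that integral vanishes and, since $v>0$, $u\equiv 0$.

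The main obstacle is the spatial Laplacian estimate: the one-sided hypothesis $\Delta d^{1+\alpha}\leq C$ must be paired with the sign $G_s\leq 0$ to extract a usable upper bound on $-\Delta\varphi_R$, and the powers of $\eta$ arising from the first- and second-order Taylor remainders (yielding $\eta^{m-1}$ and $\eta^{m-2}$ respectively) must be tracked carefully so that the choice $m>2\sigma/(\sigma-1)$ exactly absorbs the singularity of $\varphi_R^{-1/(\sigma-1)}$ at the boundary of $\operatorname{supp}\varphi_R$ and the closing of the argument through \eqref{e: stimavolumi} is quantitative.
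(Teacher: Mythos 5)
Your overall strategy (a cutoff of $(d^{\theta_1}+t^{\theta_2})/R^{\theta_1}$ raised to a large power, Taylor expansion of the Laplacian, H\"older against the weighted volume condition) is the same as the paper's, but the central pointwise estimate you rely on does not hold on a graph, and this is exactly the technical point the paper's proof is organized around. You claim $|\Delta\varphi_R(x,t)+\partial_t\varphi_R(x,t)|\leq CR^{-(1+\alpha)}\varphi_R(x,t)^{1-2/m}$, with the power of the test function evaluated \emph{at the point} $(x,t)$. As stated this is false: at a vertex with $\varphi_R(x,t)=0$ having a neighbour $y$ with $\varphi_R(y,t)>0$, the left-hand side equals $\mu(x)^{-1}\sum_y\omega_{xy}\varphi_R(y,t)>0$ while the right-hand side vanishes. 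Even in the one-sided form that the argument actually needs (an upper bound on $-(\Delta+\partial_t)\varphi_R$), your second-order Taylor remainder carries factors $\eta^{m-1},\eta^{m-2}$ evaluated at \emph{intermediate} points between $x$ and its neighbours, and these are not dominated by $\varphi_R(x,t)^{1-2/m}$ near the boundary of the support of $\varphi_R$, where $\varphi_R(x,t)$ can be zero or arbitrarily small while a neighbour's value is not. This nonlocality is precisely what breaks the continuum heuristic $|\Delta\eta^m|\lesssim\eta^{m-2}(\dots)$. The paper avoids it by proving the Laplacian bound \emph{without} any power of the cutoff, namely $-\Delta\phi_R\leq CR^{-(1+\alpha)}\mathbf{1}_{F_R}$ on an enlarged annulus $F_R$ (with a dedicated Claim showing that the second-order term $\varphi''(\xi)$ is supported in $F_R$), then recovers the power $\phi_R^{s-1}(x,t)$ at the correct point afterwards through the convexity inequality $-\Delta\phi_R^s\leq -s\phi_R^{s-1}\Delta\phi_R$, and finally discards the leftover power of $\phi_R$ in the Young/H\"older step, covering $F_R$ by finitely many dyadic annuli $E_{2^{k/\theta_1-1}R}$ to invoke \eqref{e: stimavolumi}. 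Your parenthetical assertion that a $j$-neighbourhood of $E_R$ has "comparable weighted volume" is likewise unsupported; the hypothesis is only given on the sets $E_R$, and the dyadic covering is the step that makes this rigorous.

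A second, independent problem is your substitution $s=d^{1+\alpha}$. The function $G(s,t)=\eta\bigl(R^{-\theta_1}(s^{\theta_1/(1+\alpha)}+t^{\theta_2})\bigr)^m$ has $G_{ss}$ containing a term proportional to $s^{\theta_1/(1+\alpha)-2}$, which is unbounded as $s\to0^+$ whenever $\theta_1<2(1+\alpha)$ --- a regime allowed by the hypotheses (e.g.\ $\theta_1=2$, $\alpha=1/2$). Since the intermediate points $\xi_{x,y}$ can be arbitrarily close to $0$ for vertices near $x_0$ at times $t$ with $t^{\theta_2}\approx R^{\theta_1}$ (where $\eta'\neq0$), your claimed bound $|G_{ss}|\leq C\eta^{m-2}R^{-2(1+\alpha)}$ fails there, and your separate treatment of $B_{R_0}(x_0)$ does not repair this, as the singularity of $G_{ss}$ at $s=0$ is unrelated to where hypothesis \eqref{e7f}(v) holds. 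The paper instead expands in the variable $d^{\theta_1}$ with $\theta_1\geq2$, so the analogous second derivative carries $\eta^{\theta_1-2}$ with $\eta$ an intermediate value of the distance, which stays bounded by $(d(x_0,x)+j)^{\theta_1-2}\leq(6R)^{\theta_1-2}$ on $F_R$; the hypothesis $\Delta d\leq Cd^{-\alpha}$ is then used directly on the first-order term rather than through the reformulation $\Delta d^{1+\alpha}\leq C$. These two gaps would need to be closed before the H\"older step can be run as you describe.
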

	
	Now we discuss some consequences of Theorem \ref{teo1} when the potential $v$ has a special form.
	
	\begin{corollary}\label{coroll1}
		Let assumption \eqref{e7f} be satisfied. Assume the graph $(V,\omega,\mu)$ is infinite.  Let $\sigma>1$ and $v\colon V\times [0,\infty)\to \mathbb{R}$ be a positive function such that $v(x,t)\geq f(t)g(x)$ for every $(x,t)\in V\times[0,+\infty)$, for some positive functions $f,g$.
		Suppose that for every $R\geq R_0>1$, $T\geq T_0>0$
		\begin{equation}\label{e: stimavolumi-C1}
			\int_0^T f^{-\frac{1}{\sigma-1}}(t)\,dt\leq CT^{\delta_1},\qquad \sum_{x\in {B_R(x_0)}}\mu(x)g^{-\frac{1}{\sigma-1}}(x)\leq CR^{\delta_2},
		\end{equation}
		with $\delta_1,\delta_2\geq0$ such that $$(1+\alpha)\delta_1+\delta_2\leq\frac{(1+\alpha)\sigma}{\sigma-1},$$  and $\alpha, x_0$ as in \eqref{e7f}. Let $u\colon V\times [0,\infty)\to \mathbb{R}$ be a non-negative very weak solution of \eqref{e: mainequation}, then $u\equiv 0$.
	\end{corollary}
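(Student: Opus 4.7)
The plan is to deduce Corollary \ref{coroll1} from Theorem \ref{teo1} by verifying the space-time weighted volume bound \eqref{e: stimavolumi} starting from the separated hypotheses on $f$ and $g$. First I would fix the choice of parameters $\theta_1 = 2(1+\alpha)$ and $\theta_2 = 2$; these satisfy $\theta_1\geq 2$, $\theta_2\geq 1$, and $\theta_1/\theta_2 = 1+\alpha$, so Theorem \ref{teo1} is applicable with these values for any $\alpha\in[0,1]$.

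Next, since $v(x,t)\geq f(t)g(x)>0$, I would use the pointwise bound
\[
v(x,t)^{-\frac{1}{\sigma-1}} \leq f(t)^{-\frac{1}{\sigma-1}}\, g(x)^{-\frac{1}{\sigma-1}}.
\]
The set $E_R$ defined in \eqref{22} satisfies
\[
E_R \subset B_{2^{1/\theta_1}R}(x_0)\times \bigl[0,\, 2^{1/\theta_2}R^{\theta_1/\theta_2}\bigr],
\]
so that the indicator $\textbf{1}_{E_R}(x,t)$ is dominated by the product of the two one-variable indicators. Substituting these inequalities into the left-hand side of \eqref{e: stimavolumi}, the integral/sum decouples into a product of the time integral of $f^{-1/(\sigma-1)}$ on $[0, 2^{1/\theta_2}R^{\theta_1/\theta_2}]$ and the weighted sum of $g^{-1/(\sigma-1)}$ on $B_{2^{1/\theta_1}R}(x_0)$.

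Then, for $R$ sufficiently large so that $2^{1/\theta_2}R^{\theta_1/\theta_2}\geq T_0$ and $2^{1/\theta_1}R\geq R_0$, the hypotheses \eqref{e: stimavolumi-C1} yield
\[
\int_0^{+\infty}\sum_{x\in V}\textbf{1}_{E_R}(x,t)\,v(x,t)^{-\frac{1}{\sigma-1}}\mu(x)\,dt \leq C\,R^{\frac{\theta_1}{\theta_2}\delta_1 + \delta_2} = C\,R^{(1+\alpha)\delta_1 + \delta_2}.
\]
The standing assumption $(1+\alpha)\delta_1+\delta_2 \leq (1+\alpha)\sigma/(\sigma-1)$ is precisely what makes this bound consistent with \eqref{e: stimavolumi} (absorbing the multiplicative constant as needed and using $R\geq 1$ to raise the exponent if necessary).

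Finally, I would invoke Theorem \ref{teo1} directly to conclude that the only nonnegative very weak solution $u$ of \eqref{e: mainequation} is $u\equiv 0$. The argument is essentially a bookkeeping exercise, and I do not expect any real obstacle; the only mildly delicate point is the choice of $(\theta_1,\theta_2)$, which must satisfy both the integrality constraints of Theorem \ref{teo1} and reproduce the exponent $1+\alpha$ in the decoupled bound, and the verification that the threshold $R_0$ can be enlarged so that the hypotheses \eqref{e: stimavolumi-C1} become applicable on the relevant range.
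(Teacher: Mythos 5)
Your proposal is correct and follows essentially the same route as the paper: the same choice $\theta_1=2(1+\alpha)$, $\theta_2=2$, the same inclusion $E_R\subset B_{2^{1/\theta_1}R}(x_0)\times[0,2^{1/\theta_2}R^{1+\alpha}]$, the same decoupling of the space-time sum via $v^{-1/(\sigma-1)}\leq f^{-1/(\sigma-1)}g^{-1/(\sigma-1)}$, and the same appeal to Theorem \ref{teo1}. No gaps.
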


	\begin{corollary}\label{coroll2}
		Let assumption \eqref{e7f} be satisfied. Assume the graph $(V,\omega,\mu)$ is infinite. Let $\sigma>1$ and $v\colon V\times [0,\infty)\to \mathbb{R}$ be a positive function such that $v(x,t)\geq g(x)$ for every $(x,t)\in V\times[0,+\infty)$, for some positive function $g$.
		Suppose that for every $R\geq R_0>1$
		\begin{equation}\label{e: stimavolumi-C2}
			\sum_{x\in {B_R(x_0)}}\mu(x)g^{-\frac{1}{\sigma-1}}(x)\leq CR^{\frac{1+\alpha}{\sigma-1}},
		\end{equation}
		with $\alpha, x_0$ as in \eqref{e7f}. Let $u\colon V\times [0,\infty)\to \mathbb{R}$ be a non-negative very weak solution of \eqref{e: mainequation}, then $u\equiv 0$.
	\end{corollary}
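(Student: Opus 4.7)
The plan is to deduce Corollary \ref{coroll2} as a direct specialization of Corollary \ref{coroll1}. Since the hypothesis on $v$ reads $v(x,t)\geq g(x)$, we are in the setting of Corollary \ref{coroll1} with the choice $f(t)\equiv 1$ and the given $g(x)$. The first part of \eqref{e: stimavolumi-C1} is then trivial:
\[
\int_0^T f(t)^{-\frac{1}{\sigma-1}}\,dt = T,
\]
so this hypothesis holds with equality and with $\delta_1 = 1$. The second part of \eqref{e: stimavolumi-C1} coincides verbatim with the assumption \eqref{e: stimavolumi-C2} once we set $\delta_2 = \frac{1+\alpha}{\sigma-1}$.

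It remains only to check the compatibility condition $(1+\alpha)\delta_1 + \delta_2 \leq \frac{(1+\alpha)\sigma}{\sigma-1}$ required by Corollary \ref{coroll1}. With our values of $\delta_1,\delta_2$ this is the identity
\[
(1+\alpha) + \frac{1+\alpha}{\sigma-1} = (1+\alpha)\cdot\frac{\sigma}{\sigma-1},
\]
i.e.\ the inequality is saturated. All hypotheses of Corollary \ref{coroll1} are therefore verified, and the conclusion $u\equiv 0$ follows immediately.

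There is no genuine obstacle in this argument: the mathematical content is entirely contained in Corollary \ref{coroll1} (and ultimately in the test function argument used to prove Theorem \ref{teo1}), and Corollary \ref{coroll2} simply records the time-independent case in which the exponent on $R$ in the weighted volume bound takes its critical Fujita-type value $\frac{1+\alpha}{\sigma-1}$.
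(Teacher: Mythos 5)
Your proposal is correct and matches the paper's proof exactly: the paper also derives Corollary \ref{coroll2} as an immediate consequence of Corollary \ref{coroll1} with the choice $f\equiv 1$. You merely make explicit the arithmetic check $(1+\alpha)\cdot 1+\frac{1+\alpha}{\sigma-1}=\frac{(1+\alpha)\sigma}{\sigma-1}$ that the paper leaves implicit.
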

	
	\begin{corollary}\label{coroll3}
		Let assumption \eqref{e7f} be satisfied. Assume the graph $(V,\omega,\mu)$ is infinite. Let $\sigma>1$ and assume that for every $R\geq R_0>1$
		\begin{equation}\label{e: stimavolumi-C3}
			\operatorname{Vol}(B_R(x_0))\leq CR^{\frac{1+\alpha}{\sigma-1}},
		\end{equation}
		with $\alpha, x_0$ as in \eqref{e7f}. Let $u\colon V\times [0,\infty)\to \mathbb{R}$ be a non-negative very weak solution of
		\[
		u_t \geq \Delta u + u^{\sigma} \quad \text{ in } V\times (0, +\infty),
		\]
		then $u\equiv 0$.
	\end{corollary}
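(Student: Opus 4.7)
The approach is to recognize Corollary \ref{coroll3} as an immediate specialization of Corollary \ref{coroll2} to the case of unit potential, taking $g(x) \equiv 1$. If $u$ is a nonnegative very weak solution of $u_t \geq \Delta u + u^\sigma$, then it is equally a nonnegative very weak solution of $u_t \geq \Delta u + v u^\sigma$ with the positive potential $v(x,t) \equiv 1$, and the lower bound $v(x,t) \geq g(x) = 1$ holds trivially on all of $V \times [0,+\infty)$.

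The key verification is that with this choice $g(x)^{-1/(\sigma-1)} \equiv 1$, so that
\begin{equation*}
\sum_{x \in B_R(x_0)} \mu(x)\, g(x)^{-1/(\sigma-1)} \;=\; \sum_{x \in B_R(x_0)} \mu(x) \;=\; \operatorname{Vol}(B_R(x_0)).
\end{equation*}
Hence hypothesis \eqref{e: stimavolumi-C2} coincides verbatim with the assumption \eqref{e: stimavolumi-C3} of Corollary \ref{coroll3}, and Corollary \ref{coroll2} applies, yielding $u \equiv 0$.

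A fully equivalent alternative route is to invoke Theorem \ref{teo1} directly: choosing for instance $\theta_1 = 2$ and $\theta_2 = 2/(1+\alpha) \in [1,2]$ (which respects $\theta_1 \geq 2$, $\theta_2 \geq 1$ and $\theta_1/\theta_2 = 1+\alpha$ for every $\alpha \in [0,1]$), any $(x,t) \in E_R$ satisfies $d(x_0,x) \leq 2^{1/\theta_1} R$ and $t \leq 2^{1/\theta_2} R^{1+\alpha}$, so the left-hand side of \eqref{e: stimavolumi} with $v \equiv 1$ is bounded by a constant multiple of $R^{1+\alpha}\,\operatorname{Vol}(B_{2^{1/\theta_1} R}(x_0))$, which by \eqref{e: stimavolumi-C3} is in turn dominated by $C R^{(1+\alpha)\sigma/(\sigma-1)}$, as required. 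There is essentially no obstacle in either route: the statement is a strict specialization, and the only minor point to check is the admissibility of the parameters $\theta_1, \theta_2$, which is immediate.
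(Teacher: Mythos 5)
Your proposal is correct and follows the same route as the paper, which proves Corollary \ref{coroll3} by taking $g\equiv 1$ in Corollary \ref{coroll2} (which in turn comes from Corollary \ref{coroll1} with $f\equiv1$), exactly as in your first paragraph. The alternative direct verification of \eqref{e: stimavolumi} via Theorem \ref{teo1} is also sound, but it is just an unwinding of the same chain of specializations.
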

	
	\medskip
	
	We postpone the much simpler case of a finite graph $V$ to Section \ref{finite}.

	\section{Proofs of the main results}\label{sec3}
	
	\begin{proof}[Proof of Theorem \ref{teo1}]\label{proofs}
		Let $\varphi\in C^2([0,+\infty))$ be a cut-off function such that $\varphi\equiv 1$ in $[0,1]$, $\varphi\equiv 0$ in $[2,+\infty)$, and $\varphi'\leq 0$. For each $x\in V$, $t\in [0,+\infty)$ and $R\geq R_0$ we define
		\[
		\psi_R(x,t)=\frac{t^{\theta_2}+ d(x_0,x)^{\theta_1}}{R^{\theta_1}},\qquad\phi_R(x,t)=\varphi(\psi_R(x,t)).
		\]
		
		We are going to prove the following two upper bounds
		\begin{enumerate}[(i)]
			\item\label{i: stima laplaciano}  there exists $C\geq 0$ such that for every $x\in V$ and $t\in [0,+\infty)$ it holds  \[-\Delta\phi_R(x,t)\leq \frac{C}{R^{1+\alpha}} \textbf{1}_{F_R}(x,t),\]
			where $F_R\coloneqq\{(x,t)\in V\times [0,+\infty)\colon (R/2)^{\theta_1}\leq d(x_0,x)^{\theta_1}+t^{\theta_2}\leq  (4R)^{\theta_1}\}$;
			\item\label{i: stima derivata in tempo} there exists $C\geq 0$ such that for every $x\in V$ and $t\in [0,+\infty)$ it holds  \[-\frac{\partial \phi_R}{\partial t}(x,t)\leq \frac{C}{R^{\frac{\theta_1}{\theta_2}}}\textbf{1}_{E_R}(x,t).\]
		\end{enumerate}
		
		Let us first prove \eqref{i: stima laplaciano}. Let $x\in V$ and $t\geq 0$. We observe that for each $p,r\geq 0$ the formula
		\begin{equation*}
			\varphi(p)=\varphi(r)+\varphi'(r)(p-r) +\frac{\varphi''(\xi)}{2}(p-r)^2,
		\end{equation*}
		holds for some $\xi\geq 0$ between $r$ and $p$. Letting $r=\psi_R(x,t)$ and $p=\psi_R(y,t)$, we get that for some $\xi$ between $\psi_R(x,t)$ and $\psi_R(y,t)$
		
		\begin{equation*}
			\begin{split}
				-\Delta\phi_R(x,t)&=-\frac{1}{\mu(x)}\sum_{y\sim x} \omega_{xy}[\varphi(\psi_R(y,t)) - \varphi(\psi_R(x,t))]\\
				&=-\frac{1}{\mu(x)}\varphi'(\psi_R(x,t))\sum_{y\sim x}\omega_{xy}(\psi_R(y,t) - \psi_R(x,t)) \\ &\quad- \frac{1}{2\mu(x)}\sum_{y\sim x}\omega_{xy} \varphi''(\xi) (\psi_R(y,t)-\psi_R(x,t))^2\\
				&=-\frac{1}{\mu(x)}\varphi'(\psi_R(x,t))\sum_{y\sim x}\omega_{xy}\left(\frac{d(x_0,y)^{\theta_1}-d(x_0,x)^{\theta_1}}{R^{\theta_1}}\right)\\
				&\quad - \frac{1}{2\mu(x)}\sum_{y\sim x}\omega_{xy} \varphi''(\xi) \left(\frac{d(x_0,y)^{\theta_1}-d(x_0,x)^{\theta_1}}{R^{\theta_1}}\right)^2.
			\end{split}
		\end{equation*}
		Similarly, for every $p,r\geq0$ we have
		\[
		p^{\theta_1}=r^{\theta_1}+\theta_1r^{\theta_1-1}(p-r)+\frac{\theta_1(\theta_1-1)}{2}\eta^{\theta_1-2}(p-r)^2
		\]
		and
		\[
		p^{\theta_1}=r^{\theta_1}+\theta_1\lambda^{\theta_1-1}(p-r)
		\]
		for some $\eta,\lambda\geq0$ between $r$ and $p$. Choosing $r=d(x_0,x)$ and $p=d(x_0,y)$ we get
		\begin{equation}\label{1}
			\begin{split}
				-\Delta\phi_R(x,t)&=-\frac{\theta_1 d(x_0,x)^{\theta_1-1}\varphi'(\psi_R(x,t))}{R^{\theta_1}}\Delta d(x_0,x)\\
				&\quad -\frac{\theta_1(\theta_1 - 1)}{2R^{\theta_1}}\varphi'(\psi_R(x,t))\frac{1}{\mu(x)}\sum_{y\sim x}\omega_{xy}\eta^{\theta_1 - 2}(d(x_0,y) -d(x_0,x))^2 \\
				&\quad - \frac{1}{2\mu(x)}\sum_{y\sim x}\omega_{xy} \varphi''(\xi) \left(\frac{\theta_1\lambda^{\theta_1-1}(d(x_0,y) -d(x_0,x))}{R^{\theta_1}}\right)^2.
			\end{split}
		\end{equation}
		for some $\eta$ and $\lambda$ between $d(x_0,x)$ and $d(x_0,y)$.
		Now, observe that $\varphi'(\psi_R(x,t))\leq 0$, $\varphi'$ is bounded and that $\varphi'(\psi_R(x,t))\equiv0$ on $E_R^c$. Then, by using \eqref{e7f}, we get for some $C>0$
		\begin{equation}\label{e: primotermine}
			\begin{split}
				-\frac{\theta_1 (d(x_0,x))^{\theta_1-1}\varphi'(\psi_R(x,t))}{R^{\theta_1}}\Delta d(x_0,x)&\leq  \frac{C}{R^{\theta_1}}(d(x_0,x))^{\theta_1-1-\alpha}\textbf{1}_{E_R}(x,t)\\
				&\leq  \frac{C}{R^{1+\alpha}}\textbf{1}_{E_R}(x,t),
			\end{split}
		\end{equation}
		where in the last inequality we have used that $\theta_1\geq2\geq1+\alpha$ and that $(x,t)\in E_R$ implies $d(x_0,x)\leq 2^{\frac{1}{\theta_1}}R$. Now note that
		\[
		|d(x_0,y)-d(x_0,x)|\leq d(x,y)\leq j
		\]
		and that
		\begin{equation}\label{2}
			0\leq\eta,\lambda\leq\max\{d(x_0,y),d(x_0,x)\}\leq d(x_0,x)+j.
		\end{equation}
		Then, also using \eqref{e7f}, if $R>j$ for every $(x,t)$ we have
		\begin{equation}\label{e: secondotermine}
			\begin{aligned}
				-\frac{\theta_1(\theta_1 - 1)}{2R^{\theta_1}}&\varphi'(\psi_R(x,t))\frac{1}{\mu(x)}\sum_{y\sim x}\omega_{xy}\eta^{\theta_1 - 2}(d(x_0,y) -d(x_0,x))^2\\
				& \leq \frac{C(d(x_0,x)+j)^{\theta_1 - 2}}{R^{\theta_1}}\textbf{1}_{E_R}(x,t)\frac{1}{\mu(x)}\sum_{y\sim x}\omega_{xy}\leq \frac{C}{R^2}\textbf{1}_{E_R}(x,t),
			\end{aligned}
		\end{equation}
		
		In order to estimate the last term on the right-hand side of \eqref{1}. we need the following
		
		\smallskip
		
		\emph{Claim}: let $R\geq2j$, suppose that either $d^{\theta_1}(x)+ t^{\theta_2}\geq (4R)^{\theta_1}$ or $d^{\theta_1}(x)+ t^{\theta_2}\leq (R/2)^{\theta_1}$ holds, then $\xi\notin (1,2)$.
		
		\smallskip
		
		Let us first study the case $d(x_0,x)\geq 2j$. Since we have
		\[
		\min\{\psi_R(x,t),\psi_R(y,t)\}\leq \xi\leq \max\{\psi_R(x,t),\psi_R(y,t)\},
		\]
		if $d(x_0,x)^{\theta_1}+ t^{\theta_2}\leq (R/2)^{\theta_1}$ we deduce
		\begin{equation*}
			\begin{split}
				\xi \leq \frac{(d(x_0,x)+j)^{\theta_1}+t^{\theta_2}}{R^{\theta_1}} &\leq 2^{\theta_1-1}\frac{d(x_0,x)^{\theta_1}+j^{\theta_1}+t^{\theta_2}}{R^{\theta_1}}\leq  2^{\theta_1-1}\frac{d(x_0,x)^{\theta_1}+t^{\theta_2}}{R^{\theta_1}}+\frac12\leq 1.
			\end{split}
		\end{equation*}
		On the other hand,  suppose that $d(x)^{\theta_1}+ t^{\theta_2}\geq (4R)^{\theta_1}$. Since the function $H(r)=\frac{(r-1)^{\theta_1}}{r^{\theta_1}-1}$ is increasing on $[2,\infty)$, for every
		$a,b\in \mathbb{R}$ such that $a\geq 2b\geq0$ we have
		\[
		(a-b)^{\theta_1}\geq \frac{1}{2^{\theta_1}}(a^{\theta_1}-b^{\theta_1}).
		\]
		Then we deduce that
		\begin{equation*}
			\begin{split}
				\xi &\geq \frac{(d(x_0,x)-j)^{\theta_1}+t^{\theta_2}}{R^{\theta_1}}\geq \frac{1}{2^{\theta_1}} \frac{d(x_0,x)^{\theta_1}-j^{\theta_1}+t^{\theta_2}}{R^{\theta_1}}\geq \frac{1}{2^{\theta_1}}\frac{d(x_0,x)^{\theta_1}+t^{\theta_2}}{R^{\theta_1}}-\frac{1}{4^{\theta_1}}\\
				& \geq 2^{\theta_1}-\frac{1}{4^{\theta_1}}\geq 2.
			\end{split}
		\end{equation*}
		Therefore we have proved that $\xi \notin (1,2)$, if $d(x_0,x)\geq 2j$. Assume now that $d(x_0,x)< 2j$. Similarly as in the previous case, if $d^{\theta_1}(x_0,x)+ t^{\theta_2}\leq (R/2)^{\theta_1}$ we have $\xi \leq 1$. On the other hand if $d^{\theta_1}(x_0,x)+t^{\theta_2}\geq (4R)^{\theta_1}$ we have
		\begin{equation*}
			\begin{split}
				\xi&\geq \frac{(d(x_0,x)-j)^{\theta_1}+t^{\theta_2}}{R^{\theta_1}}\geq \frac{t^{\theta_2}}{R^{\theta_1}}\geq \frac{(4R)^{\theta_1}-d(x_0,x)^{\theta_1}}{R^{\theta_1}}\\
				&\geq \frac{(4R)^{\theta_1}-(2j)^{\theta_1}}{R^{\theta_1}}\geq \frac{(4^{\theta_1}-1)R^{\theta_1}}{R^{\theta_1}}\geq 2.
			\end{split}
		\end{equation*}
		Also in this case $\xi\notin (1,2)$. This concludes the proof of the \emph{Claim}.
		
		\medskip
		
		We now observe that, by the \emph{Claim} proved above, if $R\geq2j$ and $(x,t)\in F_R^c$ then $\xi\notin(1,2)$ and hence $\varphi''(\xi)=0$. Thus, also using \eqref{2} and \eqref{e7f},  we deduce that if $R\geq2j$
		
		\begin{equation}\label{e: terzotermine}
			\begin{split}
				- \frac{1}{2\mu(x)}\sum_{y\sim x}&\omega_{xy} \varphi''(\xi) \left(\frac{\theta_1\lambda^{\theta_1-1}(d(x_0,y) -d(x_0,x))}{R^{\theta_1}}\right)^2\\
				&\leq C\frac{(d(x_0,x)+j)^{2\theta_1-2}}{R^{2\theta_1}}\textbf{1}_{F_R}(x,t)\frac{1}{\mu(x)}\sum_{y\sim x}\omega_{xy} \left(d(x_0,y) -d(x_0,x)\right)^2\\ &\leq \frac{C}{R^2}\textbf{1}_{F_R}(x,t),
			\end{split}
		\end{equation}
		since $(x,t)\in F_R$ and $R\geq2j$ imply $d(x_0,x)+j\leq 6R$.

		Combing \eqref{e: primotermine}, \eqref{e: secondotermine} and \eqref{e: terzotermine} with the facts that $E_R\subset F_R$ and $1+\alpha\leq 2$, we get
		\[
		-\Delta\phi_R(x,t)\leq \frac{C}{R^{1+\alpha}}\textbf{1}_{F_R}(x,t),
		\]
		which shows \eqref{i: stima laplaciano}.\\
		\smallskip
		
		Now, we are going to prove \eqref{i: stima derivata in tempo}. Let $x\in V$ and $t\geq 0$, we get
		
		\begin{equation*}
			\begin{split}
				-\frac{\partial \phi_R}{\partial t}(x,t)=-\varphi'(\psi_R(x,t))\theta_2 \frac{t^{\theta_2-1}}{R^{\theta_1}} \leq \frac{C}{R^{\theta_1}}R^{\theta_1-\frac{\theta_1}{\theta_2}}\textbf{1}_{E_R}(x,t)=\frac{C}{R^\frac{\theta_1}{\theta_2}}\textbf{1}_{E_R}(x,t),
			\end{split}
		\end{equation*}
		where we have used that $\varphi'(\psi_R(x,t))\equiv0$ on $E_R^c$, that $\varphi'$ is bounded and non-positive and that if $(x,t)\in E_R$, it holds that $t\leq CR^\frac{\theta_1}{\theta_2}$. This shows \eqref{i: stima derivata in tempo}.
		
		Now we are going to show that there exists a constant $C>0$ such that
		\begin{equation}\label{e:step2}
			\int_0^{+\infty}\sum_{x	\in V}\mu(x)u^{\sigma}(x,t)v(x,t)\leq C.
		\end{equation}
		
		Let us observe that the support of the function $\phi_R(x,t)$ is contained in $$Q_R\coloneqq B_{2^\frac{1}{\theta_1}R}(x_0)\times \big[0,2^{\frac{1}{\theta_2}}R^{\frac{\theta_1}{\theta_2}}\big],$$ so that all sums in $x\in V$ are finite, while all the integrals in the time-variable are on compact domains. Moreover $0\leq\phi_R\leq1$ in $V\times[0,+\infty)$ and $\phi_R(x,\cdot)\in C^1([0,+\infty))$ for all $x\in V$. Since $u$ is a nonnegative very weak solution of \eqref{e: mainequation}, by \eqref{e:48} testing the equation with $\phi_R^s$ and  $s>\frac{\sigma}{\sigma-1}$, we have
		\begin{equation}\label{3}
			\begin{split}
				\int_0^{\infty}\sum_{x\in V} \mu(x)v(x,t)u^\sigma(x,t)\phi^s_R(x,t)\, dt&\leq-s\int_0^{\infty} \sum_{x\in V}\mu(x)u(x,t)\phi^{s-1}_R(x,t)\frac{\partial\phi_R}{\partial t}(x,t)\,dt\\
				&\,\,\,\,\,\,-\int_0^{\infty} \sum_{x\in V}\mu(x)\Delta u(x,t)\phi^s_R(x,t)\,dt\\
				&\,\,\,\,\,\,-\sum_{x\in V}\mu(x)u(x,0)\phi^s_R(x,0)\\
				&\leq-s\int_0^{\infty} \sum_{x\in V}\mu(x)u(x,t)\phi^{s-1}_R(x,t)\frac{\partial\phi_R}{\partial t}(x,t)\,dt\\
				&\,\,\,\,\,\,-\int_0^{\infty} \sum_{x\in V}\mu(x)\Delta u(x,t)\phi^s_R(x,t)\,dt.
			\end{split}
		\end{equation}
		Considering the last right-hand side in inequality \eqref{3} term by term, by estimate \eqref{i: stima derivata in tempo} we have
		\begin{equation}\label{5}
			\begin{split}
				-s\int_0^{\infty} \sum_{x\in V}\mu(x)&u(x,t)\phi^{s-1}_R(x,t)\frac{\partial\phi_R}{\partial t}(x,t)\,dt\\
				&=-s\sum_{x\in V}\int_0^{\infty} \mu(x)u(x,t)\phi^{s-1}_R(x,t)\frac{\partial\phi_R}{\partial t}(x,t)\,dt\\
				&\leq \frac{C}{R^{\frac{\theta_1}{\theta_2}}} \sum_{x\in V}\mu(x)\int_{0}^{\infty}u(x,t)\phi^{s-1}_R(x,t)\textbf{1}_{E_R}(x,t)\, dt\\
				&\leq \frac{C}{R^{1+\alpha}} \int_{0}^{\infty}\sum_{x\in V}\mu(x)u(x,t)\phi^{s-1}_R(x,t)\textbf{1}_{E_R}(x,t)\, dt.
			\end{split}
		\end{equation}
		Let us now consider the second term on the right-hand side of \eqref{3}, by \eqref{e4f} we get
		\begin{equation*}
			-\int_0^{\infty}\sum_{x\in V}\mu(x)\Delta u(x,t)\phi^s_R(x,t)\, dt =-\int_0^{\infty}\sum_{x\in V} \mu(x) u(x,t)\Delta\phi_R^s(x,t)\,dt.
		\end{equation*}
		Now by convexity for every $p,r\geq0$ we have
		\[
		p^s\geq r^s+sr^{s-1}(p-r),
		\]
		hence, with $p=\phi_R(y,t)$, $r=\phi_R(x,t)$, we obtain
		\begin{equation*}
			\begin{split}
				-\Delta\phi_R^s(x,t)&=-\frac{1}{\mu(x)}\sum_{y\sim x}\omega_{xy}(\phi_R^s(y,t)-\phi_R^s(x,t))\\
				&\leq -\frac{1}{\mu(x)}\sum_{y\sim x}s\omega_{xy}\phi_R^{s-1}(x,t)(\phi_R(y,t)-\phi_R(x,t))\\
				&=-s\phi_R^{s-1}(x,t)\Delta\phi_R(x,t)
			\end{split}
		\end{equation*}
		Then, also using \eqref{i: stima laplaciano}, we deduce
		\begin{equation}\label{6}
			\begin{split}
				-\int_0^{\infty}\sum_{x\in V}\mu(x)&\Delta u(x,t)\phi^s_R(x,t)\, dt\\
				&\leq-\int_0^{\infty}\sum_{x\in V}s\mu(x)u(x,t)\phi_R^{s-1}(x,t)\Delta\phi_R(x,t) \,dt\\
				&\leq \frac{C}{R^{1+\alpha}} \int_0^{\infty}\sum_{x\in V}\mu(x)u(x,t)\phi_R^{s-1}(x,t)\textbf{1}_{F_R}(x,t) \,dt
			\end{split}
		\end{equation}
		Combining \eqref{3} with \eqref{5} and \eqref{6} and observing that $E_R\subset F_R$  we have
		\begin{equation}\label{42}
			\begin{split}
				\int_0^{\infty}\sum_{x\in V} \mu(x)&v(x,t)u^{\sigma}(x,t)\phi^s_R(x,t)\, dt\\
				&\leq  \frac{C}{R^{\alpha+1}}\int_0^{\infty}\sum_{x\in V}\mu(x) u(x,t)\phi_R^{s-1}(x,t)\textbf{1}_{F_R}(x,t)\,dt.
			\end{split}
		\end{equation}
		Then applying Young's inequality we obtain
		\begin{equation*}
			\begin{split}
				& \int_0^{\infty}\sum_{x\in V} \mu(x)v(x,t)u^{\sigma}(x,t)\phi^s_R(x,t)\, dt\\
				& \leq \frac{1}{\sigma}	\int_0^{\infty}\sum_{x\in V} \mu(x)v(x,t)u^{\sigma}(x,t)\phi^{s}_R(x,t)\textbf{1}_{F_R}(x,t)\,dt\\
				&\qquad + \frac{C}{R^{\frac{(1+\alpha)\sigma}{\sigma-1}}}\int_0^{\infty}\sum_{x\in V}\mu(x)\phi_R^{s-\frac{\sigma}{\sigma-1}}(x,t)v^{-\frac{1}{\sigma-1}}(x,t)\textbf{1}_{F_R}(x,t)\,dt\\
				&\leq \frac{1}{\sigma}	\int_0^{\infty}\sum_{x\in V} \mu(x)v(x,t)u^{\sigma}(x,t)\phi^{s}_R(x,t)\,dt\\
				&\qquad+ \frac{C}{R^{\frac{(1+\alpha)\sigma}{\sigma-1}}}	\int_0^{\infty}\sum_{x\in V}\mu(x)v^{-\frac{1}{\sigma-1}}(x,t)\textbf{1}_{F_R}(x,t)\,dt.
			\end{split}
		\end{equation*}
		Thus we have
		\begin{equation*}
			\begin{split}
				\int_0^{\infty}\sum_{x\in V} \mu(x)v(x,t)u^{\sigma}(x,t)\phi^s_R(x,t) dt\leq \frac{C}{R^{\frac{(1+\alpha)\sigma}{\sigma-1}}}	\int_0^{\infty}\sum_{x\in V}\mu(x)v^{-\frac{1}{\sigma-1}}(x,t)\textbf{1}_{F_R}(x,t)dt.
			\end{split}
		\end{equation*}
		Now note that $\phi_R\geq0$, $\phi_R\equiv 1$ on $B_R\coloneq \{(x,t)\in V\times [0,+\infty)\colon d(x_0,x)^{\theta_1}+t^{\theta_2}\leq R^{\theta_1}\}$ and moreover that, if $m\in\mathbb{N}$ is the first positive integer such that $m\geq3\theta_1-1$, then for every $R>0$ we have
		\[
		F_R\subset\bigcup_{k=0}^m E_{2^{\frac{k}{\theta_1}-1}R}.
		\]
		Then for every large enough $R>0$ we have
		\begin{equation*}
			\begin{split}
				\int_0^{\infty}\sum_{x\in V}\mu(x)&v(x,t)u^{\sigma}(x,t)\textbf{1}_{B_R}(x,t)\,dt\\
				&\leq \int_0^{\infty}\sum_{x\in V}\mu(x)v(x,t)u^{\sigma}(x,t)\phi^s_R(x,t)\,dt\\
				&\leq  \frac{C}{R^{\frac{(1+\alpha)\sigma}{\sigma-1}}}	\int_0^{\infty}\sum_{x\in V}\mu(x)v^{-\frac{1}{\sigma-1}}(x,t)\textbf{1}_{F_R}(x,t)\,dt \\
				&\leq\frac{C}{R^{\frac{(1+\alpha)\sigma}{\sigma-1}}}\sum_{k=0}^m\int_0^{\infty}\sum_{x\in V}\mu(x)v^{-\frac{1}{\sigma-1}}(x,t)\textbf{1}_{E_{2^{\frac{k}{\theta_1}-1}R}}(x,t)\,dt\\
				&\leq  \frac{C}{R^{\frac{(1+\alpha)\sigma}{\sigma-1}}}\sum_{k=0}^m \left(2^{\frac{k}{\theta_1}-1}R\right)^{\frac{(1+\alpha)\sigma}{\sigma-1}}
				\leq C,
			\end{split}
		\end{equation*}
		by \eqref{e: stimavolumi}. Since this last inequality does not depend on $R$, passing to the limit as $R$ tends to $+\infty$ we get
		\begin{equation}\label{43}
			\int_0^{\infty}\sum_{x\in V}\mu(x)v(x,t)u^{\sigma}(x,t)dt\leq  C,
		\end{equation}
		which proves \eqref{e:step2}. It remains now to show that $u\equiv 0$ on $V\times [0,+\infty)$. In order to do so, let us consider again inequality \eqref{42}. By a standard application of the H\"older inequality and using condition \eqref{e: stimavolumi} as above, we get
		\begin{equation*}
			\begin{split}
				\int_0^{+\infty}\sum_{x\in V}\mu(x)&v(x,t)u^\sigma(x,t)\textbf{1}_{B_R}(x,t)\,dt\\
				&\leq\int_0^{\infty}\sum_{x\in V} \mu(x)u^{\sigma}(x,t)v(x,t)\phi^s_R(x,t) \,dt \\
				&\leq \frac{C}{R^{\alpha+1}}\left(\int_0^{\infty}\sum_{x\in V}\mu(x)u^{\sigma}(x,t)v(x,t)\phi^s_R(x,t)\textbf{1}_{F_R}(x,t)\,dt\right)^{\frac{1}{\sigma}}\\
				&\qquad \left(\int_0^{\infty}\sum_{v\in V}\mu(x)v^{-\frac{1}{\sigma-1}}(x,t)\phi^{s-\frac{\sigma}{\sigma-1}}_R(x,t)\textbf{1}_{F_R}(x,t)\right)^{\frac{\sigma-1}{\sigma}} \\&\leq C\left(\int_0^{\infty}\sum_{x\in V}\mu(x)v(x,t) u^{\sigma}(x,t)\textbf{1}_{F_R}(x,t)\,dt\right)^{\frac{1}{\sigma}}.
			\end{split}
		\end{equation*}
		By \eqref{43} the last right-hand side of the above inequality tends to zero as $R$ tends to $+\infty$. It follows that
		\begin{equation*}
			\int_0^{+\infty}\sum_{x\in V}\mu(x)v(x,t)u^\sigma(x,t)\,dt\leq 0.
		\end{equation*}
		Since $\mu$ and $v$ are positive, we get $u\equiv 0$ in $V\times [0,+\infty)$, and the proof of the theorem is complete.
	\end{proof}

	In the following we derive the corollaries of our main result. 
	
	\begin{proof}[Proof of Corollary \ref{coroll1}]
		Let $\theta_1=2(1+\alpha)$, $\theta_2=2$, then the set $E_R$ defined in \eqref{22} satisfies
		\[
		E_R\subset B_{2^\frac{1}{\theta_1}R}(x_0)\times\big[0,2^\frac{1}{\theta_2}R^{1+\alpha}\big].
		\]
		Hence we see that for every large enough $R>0$ we have
		\begin{equation*}
			\begin{split}
				\int_0^{+\infty}\sum_{x\in V} \textbf{1}_{E_R}(x,t)& v(x,t)^{-\frac{1}{\sigma-1}}\mu(x)dt\\
				&\leq \int_0^{+\infty}\sum_{x\in V} \textbf{1}_{E_R}(x,t) f^{-\frac{1}{\sigma-1}}(t)g^{-\frac{1}{\sigma-1}}(x)\mu(x)dt\\
				&\leq \left(\int_0^{2^\frac{1}{\theta_2}R^{1+\alpha}}f^{-\frac{1}{\sigma-1}}(t)\,dt\right)\left(\sum_{x\in V}\mu(x)g^{-\frac{1}{\sigma-1}}(x)\textbf{1}_{B_{2^\frac{1}{\theta_1}R}(x_0)}\right)\\
				&\leq CR^{(1+\alpha)\delta_1+\delta_2}\leq CR^\frac{(1+\alpha)\sigma}{\sigma-1}.
			\end{split}
		\end{equation*}
		Then $u\equiv0$ by Theorem \ref{teo1}.
	\end{proof}

	\begin{proof}[Proof of Corollary \ref{coroll2}]
		This is an immediate consequence of Corollary \ref{coroll1}, choosing $f\equiv1$.
	\end{proof}

	\begin{proof}[Proof of Corollary \ref{coroll3}]
		This is an immediate consequence of Corollary \ref{coroll2}, choosing $g\equiv1$.
	\end{proof}

	\section{Examples}\label{ex}
	In the following example we deal with the special graph $V=\mathbb Z^N$, $N\geq1$, and with $v\equiv 1$, where our result is optimal.
	
	\begin{example}\label{ex1}
		The \textit{N-dimensional integer lattice graph} is the graph consisting of the set of vertices $\mathbb{Z}^N=\{x=(x_1,\ldots,x_N)\,:\,x_k\in\mathbb{Z}\text{ for }k=1,\ldots,N\}$, the edge weight
		\[\omega:\mathbb Z^N\times \mathbb Z^N\to [0,+\infty),\]
		\begin{equation*}
			\omega_{xy}= \begin{cases}
				1, \,\, \text{ if } \sum_{k=1}^N|y_k-x_k|=1,\\
				0, \,\, \text{ otherwise}
			\end{cases}
		\end{equation*}
		for every $x,y\in \mathbb{Z}^N$ and the node measure $$\mu(x)\coloneqq \sum_{y\sim x}\omega_{xy}=2N$$ for every $x\in \mathbb{Z}^N$.
		Then we see that the set of edges is
		\begin{equation*}
			E=\left\{(x,y)\colon x,y \in \mathbb{Z}^N,\,\, \sum_{i=1}^{N}|x_i-y_i|=1\right\}.
		\end{equation*}
		For every function $u:\mathbb{Z}^N\rightarrow\mathbb{R}$ we have $$\Delta u(x)=\frac{1}{2N}\sum_{y\sim x}(u(y)-u(x))\qquad \text{ for all }x\in\mathbb{Z}^N.$$
		
		We consider the graph $(\mathbb{Z}^N,\omega, \mu)$ endowed with the euclidean distance
		\begin{equation*}
			|x-y|\coloneqq d(x,y)=\left(\sum_{i=1}^N(x_i-y_i)^2\right)^{1/2}.
		\end{equation*}
		It is immediate to see that for $R>1$ we have
		\begin{equation*}
			\sum_{x\in B_R(0)}\mu(x)= 2N\sum_{x\in B_{R}(0)}1\leq C R^{N}.
		\end{equation*}
		We also have that condition \eqref{e7f}-(v) holds, with $\alpha=1$. Indeed it is not difficult to see that
		\begin{equation*}
			\begin{split}
				\Delta d^2(x,0) &= \sum_{y\sim x}\frac{\omega_{xy}}{\mu(x)}(d^2(y,0)-d^2(x,0))=1
			\end{split}
		\end{equation*}
		for all $x\in\mathbb{Z}^N$, see e.g. Example 5.1 in \cite{MPS1}. By convexity of the real valued function $h(p)=p^2$ we conclude that
		\begin{equation*}
			\Delta d(x,0) \leq \frac{1}{2 d(x,0)}
		\end{equation*}
		if $d(x,0)\geq1$, see Remark \ref{rem111}. Then Theorem \ref{teo1} and all its Corollaries apply, with $\alpha=1$, if the corresponding weighted volume growth assumption is satisfied. In particular, by Corollary \ref{coroll3}, we see that
		\[
		u_t \geq \Delta u + u^{\sigma} \quad \text{ in } \mathbb{Z}^N\times (0, +\infty)
		\]
		does not have nontrivial nonnegative solutions if $N\leq\frac{2}{\sigma-1}$, that is if $1<\sigma\leq1+\frac{2}{N}$.
		
		From the global existence result in \cite{LW2}, equation 
		\[u_t = \Delta u + u^{\sigma} \quad \text{ in } \mathbb{Z}^N\times (0, +\infty)\]
		completed with the initial condition $u(x,0)=u_0(x)\geq0$, $\forall x\in \mathbb Z^N$,
		admits a global in time nonnegative solution, provided that $u_0$ is small enough and $\sigma>1+\frac{2}{N}.$ Therefore, our nonexistence result is optimal.  
	\end{example}
	
	In the next two examples we apply our general theorem to a couple of specific graphs.
	
	\begin{example}
		Let $(V,\omega,\mu)$ be a connected, locally finite, undirected, weighted graph such that $\sum_{y\sim x}\omega_{xy}\leq C \mu(x)$ for every $x\in V$. Then condition \eqref{e7f} is satisfied with $\alpha=0$ by the natural distance $d_*$ on $V$, defined in \eqref{333}, see Remark \ref{4444}. Then Theorem \ref{teo1} and all its Corollaries apply, with $\alpha=0$. In particular by Corollary \ref{coroll3} we see that
		\[
		u_t \geq \Delta u + u^{\sigma} \quad \text{ in } V\times (0, +\infty)
		\]
		does not have nontrivial nonnegative solutions if for some $x_0\in V$ and every $R\geq R_0>0$ we have $$\operatorname{Vol}(B_R(x_0))\leq CR^{\frac{1}{\sigma-1}}.$$
	\end{example}
	
	\begin{example}
		Let $(V_1,\omega_1,\mu_1)$ and $(V_2,\omega_2,\mu_2)$ be connected, locally finite, undirected, weighted graphs, each endowed with a pseudometric $d_1$,$ d_2$, respectively, with finite jump size $j_1$, $j_2$. Define $(V,\omega,\mu)$ by setting $V=V_1\times V_2$ and for every $(x_1,x_2),(y_1,y_2)\in V_1\times V_2$ let
		\begin{equation}\label{777}
			\omega((x_1,x_2),(y_1,y_2)):=\begin{cases}
				\omega_1(x_1,y_1)&\text{ if }x_2=y_2\\
				\omega_2(x_2,y_2)&\text{ if }x_1=y_1\\
				0&\text{ otherwise.}
			\end{cases}
		\end{equation}
		Let $\mu$ be any positive measure on $V_1\times V_2$ such that
		\begin{equation}\label{779}
			\mu(x_1,x_2)\geq C\max\{\mu_1(x_1),\mu_2(x_2)\}
		\end{equation}
		for every $(x_1,x_2)\in V_1\times V_2$. Then $(V,\omega,\mu)$ is a connected, locally finite, undirected, weighted graph. Moreover, for any $p\in[1,2]$ let
		$$d((x_1,x_2),(y_1,y_2)):=\left(d_1^p(x_1,y_1)+d_2^p(x_2,y_2)\right)^\frac{1}{p}$$
		for every $(x_1,x_2),(y_1,y_2)\in V_1\times V_2$. Then $d$ is a pseudometric on $V$, with finite jump size $j\leq\max\{j_1,j_2\}$. Let $(w_1,w_2)\in V_1\times V_2$ be fixed, using \eqref{777} we have
		\begin{equation}\label{778}
			\begin{split}
				\Delta_V& d^p((x_1,x_2),(w_1,w_2))\\
				&=\frac{1}{\mu(x_1,x_2)}\sum_{(y_1,y_2)\in V}\omega((x_1,x_2),(y_1,y_2))(d^p((y_1,y_2),(w_1,w_2))-d^p((x_1,x_2),(w_1,w_2)))\\
				&=\frac{1}{\mu(x_1,x_2)}\sum_{(y_1,y_2)\in V}\omega((x_1,x_2),(y_1,y_2))(d_1^p(y_1,w_1)+d_2^p(y_2,w_2)-d_1^p(x_1,w_1)-d_2^p(x_2,w_2))\\
				&=\frac{1}{\mu(x_1,x_2)}\sum_{(x_1,y_2)\in V}\omega_2(x_2,y_2)(d_2^p(y_2,w_2)-d_2^p(x_2,w_2))\\
				&\qquad+\frac{1}{\mu(x_1,x_2)}\sum_{(y_1,x_2)\in V}\omega_1(x_1,y_1)(d_1^p(y_1,w_1)-d_1^p(x_1,w_1))\\
				&=\frac{\mu_2(x_2)}{\mu(x_1,x_2)}\Delta_{V_2} d_2^p(x_2,w_2)+\frac{\mu_1(x_1)}{\mu(x_1,x_2)}\Delta_{V_1} d_1^p(x_1,w_1)
			\end{split}
		\end{equation}
		
		\medskip
		
		Now suppose $(V_1,\omega_1,\mu_1)$ and $(V_2,\omega_2,\mu_2)$ are infinite graphs that satisfy condition \eqref{e7f} for some $\alpha_1,\alpha_2\in[0,1]$, with respect to some points $w_1\in V_1$ and $w_2\in V_2$,  respectively. Then both graphs satisfy the same condition for $\alpha=\min\{\alpha_1,\alpha_2\}$, and by Remark \ref{rem111} we have
		\begin{equation}\label{780}
			\Delta_{V_1} d_1^{1+\alpha}(x_1,w_1)\leq C, \qquad\Delta_{V_2} d_2^{1+\alpha}(x_2,w_2)\leq C,
		\end{equation}
		for every $x_1\in V_1$, $x_2\in V_2$ such that $d_1(x_1,w_1)\geq R_0$, $d_2(x_2,w_2)\geq R_0$. Since the sets $\{x_1\in V_1\,:\,d_1(x_1,w_1)\leq R_0\}$ and $\{x_2\in V_2\,:\,d_2(x_2,w_2)\leq R_0\}$ are finite by condition \eqref{e7f}, up to further increasing the constant $C>0$ we have that \eqref{780} holds for every $x_1\in V_1$, $x_2\in V_2$.
		
		By \eqref{778} with $p=1+\alpha$, \eqref{779} and \eqref{780} then we have
		\begin{equation}\label{781}
			\Delta_V d^{1+\alpha}((x_1,x_2),(w_1,w_2))\leq C,
		\end{equation}
		for every $(x_1,x_2)\in V$. Then by Remark \ref{rem111} we deduce that
		\begin{equation}\label{782}
			\Delta_V d((x_1,x_2),(w_1,w_2))\leq \frac{C}{d^\alpha((x_1,x_2),(w_1,w_2))},
		\end{equation}
		for every $(x_1,x_2)\in V$ such that $d((x_1,x_2),(w_1,w_2))\geq1$. Moreover, since $V_1,V_2$ both satisfy condition \eqref{e7f}, it is easy to see that the set $\{(y_1,y_2)\in V\colon d((x_1,x_2),(y_1,y_2))\leq R\}$ is finite for every $R>0$ and every $(x_1,x_2)\in V$.
		
		We have thus shown that if $(V_1,\omega_1,\mu_1)$ and $(V_2,\omega_2,\mu_2)$ are infinite graphs that satisfy condition \eqref{e7f} for some $\alpha_1,\alpha_2\in[0,1]$, then $(V,\omega,\mu)$ is an infinite graph that satisfy condition \eqref{e7f} for $\alpha=\min\{\alpha_1,\alpha_2\}$.
		
		\medskip
		
		On the other hand, now suppose $(V_1,\omega_1,\mu_1)$ is an infinite graph that satisfy condition \eqref{e7f} for some $\alpha\in[0,1]$, and assume $(V_2,\omega_2,\mu_2)$ is a finite graph. Then, as before,
		\begin{equation*}
			\Delta_{V_1} d_1^{1+\alpha}(x_1,w_1)\leq C
		\end{equation*}
		for every $x_1\in V_1$. Moreover, since $V_2$ is finite, it is obvious that
		\begin{equation*}
			\Delta_{V_2} d_2^{1+\alpha}(x_2,w_2)\leq C,
		\end{equation*}
		for every $x_2\in V_2$. We conclude also in this case that \eqref{781}, and thus \eqref{782}, hold. Finally we note that the set $\{(y_1,y_2)\in V\colon d((x_1,x_2),(y_1,y_2))\leq R\}$ is finite for every $R>0$ and every $(x_1,x_2)\in V$.
		
		We have then shown that if $(V_1,\omega_1,\mu_1)$ is an infinite graph that satisfy condition \eqref{e7f} for some $\alpha\in[0,1]$ and if $(V_2,\omega_2,\mu_2)$ is a finite graph, then $(V,\omega,\mu)$ is an infinite graph that satisfies condition \eqref{e7f}, for the same value $\alpha$.
		
		A particular example is given by the choice $V_1=\mathbb{Z}^N$, see Example \ref{ex1}, and $V_2=(\mathbb{Z}_m)^K$, with $N,K\geq1$, $m\geq2$. Here $\mathbb{Z}_m=\{[0],[1],\ldots,[m-1]\}$, with $$\omega([a],[b])=\begin{cases}1&\text{ if }[b]=[a]\pm[1]\\0&\text{otherwise}\end{cases},$$
		with
		$$\mu([a])=\sum_{[b]\in\mathbb{Z}_m}\omega([a],[b])=2$$
		and with
		$$
		d_*([a],[b])=\min\{n\in\mathbb{N}\colon\exists\text{ path of }n+1\text{ nodes connecting }[a],[b]\},
		$$
		for every $[a],[b]\in\mathbb{Z}_m$.

		Since it is easy to see that for $R>0$ large enough on $V=\mathbb{Z}^N\times(\mathbb{Z}_m)^K$ one has
		$$\operatorname{Vol}(B_R((0,[0])))\asymp R^N,$$
		by Corollary \ref{coroll3} we have that
		\[
		u_t \geq \Delta u + u^{\sigma} \quad \text{ in } \mathbb{Z}^N\times(\mathbb{Z}_m)^K\times (0, +\infty)
		\]
		does not have any nontrivial nonnegative solution if $1<\sigma\leq1+\frac{2}{N}$, see also Example \ref{ex1}.
	\end{example}
	
	\section{Finite graphs}\label{finite}
	In this Section, the main difference compared with the preceding ones where we considered only {\it infinite} graphs, is that we deal with {\it finite} graphs. Observe that in this framework condition \eqref{e7f} is automatically fulfilled; on the other hand, the weighted volume growth assumption \eqref{e: stimavolumi} takes a simpler form (see \eqref{e: stimavolumi2} below).
	
	\begin{remark}
		Note that there is no topological definition of boundary of a graph. On the other hand, if $V$ is a graph and $G\subset V$ is a subgraph, then one can define the boundary of $G$ relative to $V$, that is $$\partial G=\{x\in G\,:\, \exists\, y\in V\setminus G, y\sim x\}.$$
		In particular, in Theorem \ref{teo2} we consider finite graphs which thus have no boundary. This is the counterpart in the graph setting of a compact Riemannian manifold without boundary.
		
		Let us mention that if equation \eqref{e: mainequation} is posed on a finite subgraph $G\subset V$, completed with appropriate boundary conditions, then one expects different results. This second case would be the analog of considering a compact Riemannian manifold with boundary.
	\end{remark}
	
	In this context our result reads as follows. 
	\begin{theorem}\label{teo2}
		Let $(V,\omega,\mu)$ be a weighted \emph{finite} graph. Let $v\colon V\times [0,\infty)\to \mathbb{R}$ be a positive function, $\sigma>1$. Suppose that for every $T\geq T_0>0$
		\begin{equation}\label{e: stimavolumi2}
			\int_T^{2T}\sum_{x\in V} v^{-\frac{1}{\sigma-1}}(x,t)\mu(x)\,dt\leq CT^{\frac{\sigma}{\sigma-1}}.
		\end{equation}
		Let $u\colon V\times [0,\infty)\to \mathbb{R}$ be a non-negative very weak solution of \eqref{e: mainequation}, then $u\equiv 0$.
	\end{theorem}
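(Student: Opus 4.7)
The plan is to mirror the argument for Theorem \ref{teo1}, exploiting the fact that finiteness of $V$ makes the spatial cut-off superfluous. Concretely, I would fix $\varphi\in C^2([0,+\infty))$ with $\varphi\equiv 1$ on $[0,1]$, $\varphi\equiv 0$ on $[2,+\infty)$ and $\varphi'\leq 0$, and for $T\geq T_0$ define the purely time-dependent cut-off $\phi_T(x,t):=\varphi(t/T)$. Since $V$ is finite, $\operatorname{supp}\phi_T\subset V\times[0,2T]$ with $V$ itself playing the role of the finite set $A$ in the definition of very weak solution, and $\phi_T(x,\cdot)\in C^1([0,+\infty))$ for every $x\in V$, so $\phi_T^s$ is an admissible test function for every $s\geq 1$.

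The crucial simplification is that $\phi_T$ is constant in $x$, hence $\Delta\phi_T^s(x,t)\equiv 0$ pointwise. Thus, fixing $s>\sigma/(\sigma-1)$ and testing \eqref{e:48} against $\phi_T^s$, after using integration by parts \eqref{e4f} on the Laplacian term (which vanishes) and discarding the non-negative initial-datum contribution, one obtains
\[
\int_0^{+\infty}\sum_{x\in V}\mu(x)v(x,t)u^\sigma(x,t)\phi_T^s(t)\,dt\leq \frac{C}{T}\int_T^{2T}\sum_{x\in V}\mu(x)u(x,t)\phi_T^{s-1}(t)\,dt,
\]
since $-s\phi_T^{s-1}\phi_T'\leq (C/T)\phi_T^{s-1}\textbf{1}_{[T,2T]}$.

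Next I would apply Young's inequality with exponents $\sigma$ and $\sigma/(\sigma-1)$ to the integrand on the right, splitting $\mu u\phi_T^{s-1}$ as a product so that the $u^\sigma$-factor carries weight $\mu v\phi_T^s$ and the remainder carries weight $\mu v^{-1/(\sigma-1)}\phi_T^{s-\sigma/(\sigma-1)}$. Absorbing the $\tfrac{1}{\sigma}$-multiple of the left-hand side and invoking hypothesis \eqref{e: stimavolumi2} yields
\[
\int_0^T\sum_{x\in V}\mu(x)v(x,t)u^\sigma(x,t)\,dt\leq \frac{C}{T^{\sigma/(\sigma-1)}}\int_T^{2T}\sum_{x\in V}\mu(x)v^{-1/(\sigma-1)}(x,t)\,dt\leq C,
\]
uniformly in $T$, so $\int_0^{+\infty}\sum_x\mu v u^\sigma\,dt<+\infty$. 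Finally, to upgrade this to $u\equiv 0$, I would repeat the same test but estimate the right-hand side via H\"older's inequality with exponents $\sigma,\sigma/(\sigma-1)$ instead of Young. Together with \eqref{e: stimavolumi2} this produces the bound
\[
\int_0^T\sum_{x\in V}\mu(x)v(x,t)u^\sigma(x,t)\,dt\leq C\left(\int_T^{2T}\sum_{x\in V}\mu(x)v(x,t)u^\sigma(x,t)\,dt\right)^{1/\sigma},
\]
and the right-hand side vanishes as $T\to+\infty$ by the integrability just proved. Positivity of $\mu$ and $v$ then forces $u\equiv 0$.

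The argument is essentially a stripped-down version of the proof of Theorem \ref{teo1}, so I do not expect a serious obstacle: no distance, no curvature-type bound on $\Delta d$, no $F_R/E_R$ dichotomy, and no Laplacian estimate of the cut-off are needed. The only points requiring care are checking that $\phi_T^s$ genuinely satisfies the regularity and support requirements in the definition of very weak solution (trivial on a finite graph), and making sure the exponent bookkeeping in Young's and H\"older's inequalities lines up exactly with the power $T^{\sigma/(\sigma-1)}$ in \eqref{e: stimavolumi2}; this is what pins down the choice $s>\sigma/(\sigma-1)$.
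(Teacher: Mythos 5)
Your proposal is correct and coincides with the paper's own proof: the same purely time-dependent test function $\varphi^s(t/T)$ with $s>\frac{\sigma}{\sigma-1}$, the same observation that the Laplacian term vanishes by \eqref{e4f}, and the same Young/H\"older two-step argument using \eqref{e: stimavolumi2}. No discrepancies to report.
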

	\begin{proof}
		The proof of Theorem \ref{teo2} is similar to that of Theorem \ref{teo1}, therefore we will only give a sketch of it. Let $\varphi\in C^2([0,+\infty))$ be a cut-off function such that $\varphi\equiv 1$ in $[0,1]$, $\varphi\equiv 0$ in $[2,+\infty)$, and $\varphi'\leq 0$. We use the definition of very weak solution \eqref{e:48}, with the test function $\phi^s_{T}(t):=\varphi^s(\frac{t}{T})$ for some $s>\frac{\sigma}{\sigma-1}$ and any $T>0$, which is a feasible test function since $V$ is finite. We note that by the integration by parts formula \eqref{e4f} we have $$\sum_{x\in V}\mu(x)\Delta u(x,t) \phi^s_T(t)=\sum_{x\in V}\mu(x) u(x,t)\Delta \phi^s_T(t)=0$$ for every $t$. Then we obtain
		\begin{equation}\label{999}
			\begin{split}
				\int_0^{+\infty}\sum_{x\in V}\mu(x)v(x,t)u^\sigma(x,t)\phi^s_T(t)\,dt&\leq -s\int_0^{+\infty}\sum_{x\in V}\mu(x)u(x,t)\phi_T^{s-1}(t)\frac{\partial\phi_T}{\partial t}(t)\,dt\\
				&\leq \frac{C}{T}\int_T^{2T}\sum_{x\in V}\mu(x)u(x,t)\phi_T^{s-1}(t)\,dt
			\end{split}
		\end{equation}
		By Young's inequality one obtains
		\begin{equation*}
			\begin{split}
				\int_0^T\sum_{x\in V}\mu(x)v(x,t)u^\sigma(x,t)\,dt&\leq
				\int_0^{+\infty}\sum_{x\in V}\mu(x)v(x,t)u^\sigma(x,t)\phi^s_T(t)\,dt\\
				&\leq \frac{C}{T^{\frac{\sigma}{\sigma-1}}}\int_T^{2T}\sum_{x\in V}\mu(x)v^{-\frac{1}{\sigma-1}}(x,t)\,dt\leq C,
			\end{split}
		\end{equation*}
		by \eqref{e: stimavolumi2}. Then we conclude that 
		\begin{equation}\label{000}
			\int_0^{+\infty}\sum_{x\in V}\mu(x)v(x,t)u^\sigma(x,t)\,dt<+\infty.
		\end{equation}
		Starting again from \eqref{999}, from the H\"older inequality and using condition \eqref{e: stimavolumi2} as above, we get
		\begin{equation*}
			\begin{split}
				\int_0^T\sum_{x\in V}\mu(x)v(x,t)u^\sigma(x,t)\,dt&\leq
				\int_0^{+\infty}\sum_{x\in V}\mu(x)v(x,t)u^\sigma(x,t)\phi^s_T(t)\,dt\\
				&\leq C\left(\int_T^{2T}\sum_{x\in V}\mu(x)v(x,t)u^\sigma(x,t)\phi^s_T(t)\,dt\right)^{\frac{1}{\sigma}}\\
				&\leq C\left(\int_T^{2T}\sum_{x\in V}\mu(x)v(x,t)u^\sigma(x,t)\,dt\right)^{\frac{1}{\sigma}}.
			\end{split}
		\end{equation*}
		By letting $T$ tend to $+\infty$, from \eqref{000} we conclude that $$\int_0^{+\infty}\sum_{x\in V}\mu(x)v(x,t)u^\sigma(x,t)\,dt=0,$$ and hence $u\equiv0$ on $V\times[0,+\infty)$.
	\end{proof}
	
	In the following corollary we deal with the case of a potential $v$ independent of $t$ (see also \cite{Wu} for a preceding result in the case $v\equiv 1$).
	
	\begin{corollary}\label{coroll4}
		Let $(V,\omega,\mu)$ be a weighted \emph{finite} graph. Let $v\colon V\to \mathbb{R}$ be a positive function, $\sigma>1$.
		Let $u\colon V\times [0,\infty)\to \mathbb{R}$ be a non-negative very weak solution of \eqref{e: mainequation}, then $u\equiv 0$.
	\end{corollary}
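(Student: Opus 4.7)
The plan is to deduce Corollary \ref{coroll4} as an immediate application of Theorem \ref{teo2}, by verifying that the time-independence of $v$ together with the finiteness of $V$ force the hypothesis \eqref{e: stimavolumi2} to hold automatically.

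First I would observe that since $V$ is finite and $v(x) > 0$ for every $x\in V$, the quantity
\[
M := \sum_{x\in V} v^{-\frac{1}{\sigma-1}}(x)\,\mu(x)
\]
is a finite positive constant, depending only on the graph, on $\mu$, on $v$ and on $\sigma$. Because $v$ does not depend on $t$, for any $T>0$ we can compute the left-hand side of \eqref{e: stimavolumi2} explicitly as
\[
\int_T^{2T}\sum_{x\in V} v^{-\frac{1}{\sigma-1}}(x)\,\mu(x)\,dt \;=\; M\,T.
\]

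Next I would note that, since $\sigma>1$, the exponent satisfies $\frac{\sigma}{\sigma-1}>1$. Hence, choosing $T_0 := 1$, for every $T\geq T_0$ we obtain
\[
M\,T \;\leq\; M\,T^{\frac{\sigma}{\sigma-1}},
\]
which is exactly the weighted volume growth condition \eqref{e: stimavolumi2} (with the constant $C$ there equal to $M$). Therefore all the hypotheses of Theorem \ref{teo2} are fulfilled, and we conclude that any nonnegative very weak solution $u$ of \eqref{e: mainequation} satisfies $u\equiv 0$ on $V\times[0,+\infty)$.

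There is no genuine obstacle here: the whole content of the corollary is that, in the finite setting, the only role of the growth assumption \eqref{e: stimavolumi2} is to be checked as a trivial polynomial bound, and this happens automatically as soon as $v$ is bounded below by a positive constant on $V$, which is guaranteed by the positivity of $v$ together with the finiteness of $V$. So the argument is essentially a one-line reduction to Theorem \ref{teo2}.
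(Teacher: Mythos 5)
Your argument is correct and coincides with the paper's own proof: both reduce the corollary to Theorem \ref{teo2} by noting that, for a finite graph and a time-independent positive $v$, the integral in \eqref{e: stimavolumi2} equals a constant times $T$, which is bounded by $CT^{\frac{\sigma}{\sigma-1}}$ for $T\geq 1$ since $\frac{\sigma}{\sigma-1}>1$. No further comment is needed.
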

	\begin{proof}
		This is a straightforward consequence of Theorem \ref{teo2}, since we have
		\begin{equation*}
			\int_T^{2T}\sum_{x\in V} v^{-\frac{1}{\sigma-1}}(x)\mu(x)\,dt=AT\leq AT^{\frac{\sigma}{\sigma-1}}
		\end{equation*}
		for some constant $A>0$, for every $T\geq1$.
	\end{proof}


\end{document}